\definecolor{webgreen}{rgb}{0,.5,0}
\definecolor{webbrown}{rgb}{.6,0,0}
\theoremstyle{plain}
\newtheorem{theorem}{Theorem}
\newtheorem{corollary}[theorem]{Corollary}
\newtheorem{lemma}[theorem]{Lemma}
\newtheorem{proposition}[theorem]{Proposition}
\newcounter{remarkno}
\theoremstyle{remark}
\newtheorem{remark}[remarkno]{Remark}
\def\C{\ensuremath{\mathbb C}}
\def\bt{\ensuremath{\mathbf t}}
\def\bs{\ensuremath{\mathbf s}}
\def\bx{\ensuremath{\mathbf x}}
\newcommand{\be}{\begin{equation}}
\newcommand{\ee}{\end{equation}}
\DeclareMathOperator{\Cyc}{Cyc}
\newcommand{\order}[1]{\ensuremath{{\mathcal O}\left(#1\right)}}
\renewcommand{\c}[1]{\ensuremath{\overline{#1}}}
\newcommand{ \highl}[1]{{\colorbox{gray}{\hskip -3pt #1 \hskip -5pt}}}
\newcommand{\seqnum}[1]{\href{https://oeis.org/#1}{\rm \underline{#1}}}
\begin{document}

\begin{center}
\vskip 1cm{\LARGE\bf 
On Longest Increasing Subsequences in Words in which All Multiplicities are Equal
\vskip .1in
}
\vskip 1cm
\large
Ferenc Balogh\\ 
Department of Mathematics\\
John Abbott College\\
21275 Lakeshore Road\\
Sainte-Anne-de-Bellevue, PQ H9X 3L9\\
Canada\\
\href{mailto:ferenc.balogh@johnabbott.qc.ca}{\tt ferenc.balogh@johnabbott.qc.ca}  \\
\end{center}

\begin{abstract} Gessel's famous Bessel determinant formula gives the generating function of the number of permutations without increasing subsequences of a given length. Ekhad and Zeilberger proposed the challenge of finding a suitable generalization for permutations of multisets in which all multiplicities are equal, that is, to count words of length $rn$ from an alphabet consisting of $n$ letters in which each letter appears exactly $r$ times and which have no increasing subsequences of length $d$.

In this paper we present such a generating function expressible as a multiple integral of the product of a Gessel-type Toeplitz determinant with the exponentiated cycle index polynomial of the symmetric group on $r$ elements.
\end{abstract}

\section{Introduction and statement of results}
For positive integers $n$, $d$, and $r$, let $A_{d,r}(n)$ denote the number of words from the alphabet $\{1,2,\ldots, n\}$ of length $rn$ such that
\begin{itemize}
\item each letter appears exactly $r$ times,
\item  there are no (strictly) increasing subsequences of length $d$.
\end{itemize}
For example, out of the $\frac{6!}{2!2!2!}=90$ words from the alphabet $\{1,2,3\}$ in which each letter has multiplicity $2$, there are $43$ words with no increasing subsequences of length $3$ (shown highlighted in Table \ref{table:illustration}).

The goal of this paper is to find generating functions for the numbers $(A_{d,r}(n))_{n \geq 0}$, with $d$ and $r$ fixed.

Note that the special case $r=1$ (each letter appears only once) corresponds to the problem of determining the length of the longest increasing subsequence of a random permutation. For a detailed history of this problem and its many beautiful connections with representation theory, analysis, and probability theory, see Romik's book \cite{Ro}.

Our starting point is the following observation by Ekhad and Zeilberger \cite{EZ}: the Robinson-Schensted-Knuth (RSK) correspondence \cite{St} implies that $A_{d+1,r}(n)$ can be written as a sum over partitions $\lambda$ of weight $|\lambda|$ equal to $rn$ and of length $\ell(\lambda)$ at most $d$ in the following form:
\be
A_{d+1,r}(n) = \sum_{\substack{|\lambda|=rn\\ \ell(\lambda)\leq d}}f_{\lambda}g_{\lambda}^{(r)},
\label{RSK}
\ee
where $f_{\lambda}$ is the number of standard Young tableaux of shape $\lambda$, and $g_{\lambda}^{(r)}$ is the number of semi-standard Young tableaux of shape $\lambda$ whose content is $1^{r}2^{r}\cdots n^{r}$. Note that the original formula of Ekhad and Zeilberger \cite{EZ} had a misprint that was found and corrected by Chapuy \cite{Ch}.

For $r=1$ the quantity $g_{\lambda}^{(1)}$ reduces to $f_{\lambda}$ and hence
\be
A_{d+1,1}(n) = \sum_{\substack{|\lambda|=n\\ \ell(\lambda)\leq d}}f_{\lambda}^2.
\ee
Gessel \cite{Ge} found the following remarkable generating function for the sequence $(A_{d+1,1}(n))_{n\geq 0}$:
\be
\label{Gessel_original}
\sum_{n=0}^{\infty}\frac{A_{d+1,1}(n)}{(n!)^2}x^{2n} = \det(G_d(x)),
\ee
where $G_d(x)$ is the $d\times d$ Toeplitz matrix
\be
(G_d(x))_{ij}=I_{|i-j|}(2x), \quad 1\leq i,j \leq d,
\label{Gessel_Toeplitz}
\ee
and the entries $I_m(u)$ are \emph{modified Bessel functions}  with integer parameters. The functions $I_m(u)$ are given by the generating function \cite[Eq.\ 10.35.1]{DLMF}.
\be
\sum_{m=-\infty}^{\infty}I_m(u)z^{m} = \exp\left(\frac{u}{2}\left(\frac{1}{z}+z\right)\right).
\ee
\begin{center}
\begin{table}[htb!]
\renewcommand{\arraystretch}{1.2}
{\tiny
\begin{tabular}{cccccccccc}
1 1 2 2 3 3 & 1 1 2 3 2 3 & 1 1 2 3 3 2 & 1 1 3 2 2 3 & 1 1 3 2 3 2 &  \highl{1 1 3 3 2 2} & 1 2 1 2 3 3 & 1 2 1 3 2 3 & 1 2 1 3 3 2 & 1 2 2 1 3 3 \cr
{1 2 2 3 1 3} & 1 2 2 3 3 1 & 1 2 3 1 2 3 & 1 2 3 1 3 2 & 1 2 3 2 1 3 & 1 2 3 2 3 1 & 1 2 3 3 1 2 & 1 2 3 3 2 1 & 1 3 1 2 2 3 & 1 3 1 2 3 2 \cr
  \highl{1 3 1 3 2 2} & 1 3 2 1 2 3 & 1 3 2 1 3 2 & 1 3 2 2 1 3 & 1 3 2 2 3 1 & 1 3 2 3 1 2 & 1 3 2 3 2 1 &  \highl{1 3 3 1 2 2} &  \highl{1 3 3 2 1 2} &  \highl{1 3 3 2 2 1} \cr
  2 1 1 2 3 3 & 2 1 1 3 2 3 &  \highl{2 1 1 3 3 2} & 2 1 2 1 3 3 & 2 1 2 3 1 3 & 2 1 2 3 3 1 & 2 1 3 1 2 3 &  \highl{2 1 3 1 3 2} & 2 1 3 2 1 3 & 2 1 3 2 3 1 \cr
    \highl{2 1 3 3 1 2} &  \highl{2 1 3 3 2 1} &  \highl{2 2 1 1 3 3} &  \highl{2 2 1 3 1 3} &  \highl{2 2 1 3 3 1} &  \highl{2 2 3 1 1 3} &  \highl{2 2 3 1 3 1} &  \highl{2 2 3 3 1 1} & 2 3 1 1 2 3 &  \highl{2 3 1 1 3 2} \cr
    2 3 1 2 1 3 & 2 3 1 2 3 1 &  \highl{2 3 1 3 1 2} &  \highl{2 3 1 3 2 1} &  \highl{2 3 2 1 1 3} &  \highl{2 3 2 1 3 1} &  \highl{2 3 2 3 1 1} &  \highl{2 3 3 1 1 2} &  \highl{2 3 3 1 2 1} &  \highl{2 3 3 2 1 1} \cr
     3 1 1 2 2 3 & 3 1 1 2 3 2 &  \highl{3 1 1 3 2 2} & 3 1 2 1 2 3 & 3 1 2 1 3 2 & 3 1 2 2 1 3 & 3 1 2 2 3 1 & 3 1 2 3 1 2 & 3 1 2 3 2 1 &  \highl{3 1 3 1 2 2} \cr
       \highl{3 1 3 2 1 2} &  \highl{3 1 3 2 2 1} & 3 2 1 1 2 3 &  \highl{3 2 1 1 3 2} & 3 2 1 2 1 3 & 3 2 1 2 3 1 &  \highl{3 2 1 3 1 2} &  \highl{3 2 1 3 2 1} &  \highl{3 2 2 1 1 3} &  \highl{3 2 2 1 3 1} \cr
        \highl{3 2 2 3 1 1} &  \highl{3 2 3 1 1 2} &  \highl{3 2 3 1 2 1} &  \highl{3 2 3 2 1 1} &  \highl{3 3 1 1 2 2} &  \highl{3 3 1 2 1 2} &  \highl{3 3 1 2 2 1} &  \highl{3 3 2 1 1 2} &  \highl{3 3 2 1 2 1} &  \highl{3 3 2 2 1 1}
\end{tabular}
}
\caption{This $10\times 9$ lexicographic array lists the words from the alphabet $\{1,2,3\}$ with all multiplicities equal to $2$. The highlighted words have no increasing subsequences of length $3$, there are $A_{3,2}(3)=43$ of them.}
\label{table:illustration}
\end{table}
\end{center}
\renewcommand{\arraystretch}{1}

The determinant formula \eqref{Gessel_original} played a crucial role in the subsequent developments of the asymptotic analysis of large permutations, most notably it lead to the famous Baik--Deift--Johansson theorem \cite{BDJ}. There are many related results for increasing subsequences of words (also referred to as permutations of multisets) \cite{AD,BR, BOO,ITW1,ITW2,vM,TW}.

Ekhad and Zeilberger \cite{EZ}  proposed the challenge to generalize \eqref{Gessel_original} for multiple occurrences $r>1$. This goal is fully accomplished in the main result of this paper.

For the sake of comparison with the general formulae to be presented, one can write Gessel's generating function \eqref{Gessel_original} in a slightly different Toeplitz determinant form:
\be
\label{Gessel_modified}
\sum_{n=0}^{\infty}\frac{A_{d+1,1}(n)}{(n!)^2}(v \c{v})^{n} = \det (\tilde G_d(x)),
\ee
where
\be
(\tilde G_d(x))_{ij}=\varphi_{i-j}(v,\bar{v}), \quad 1\leq i,j \leq d,
\label{Gessel_Toeplitz_2}
\ee
and the entries $\varphi_m(v,\bar{v})$ are the Laurent series coefficients at $z=0$ of the formal Toeplitz symbol
\be
\varphi(v,\bar{v};z)=\sum_{m=-\infty}^{\infty}\varphi_m(v,\bar{v}) z^m = \exp\left(\frac{v}{z}+\c{v}z\right).
\label{symbol:Gessel}
\ee
The variables $v$ and $\c{v}$ above are complex-valued bookkeeping indeterminates. Following common practice in complex variables, $v$ and $\c{v}$ are treated as independent variables, and $\c{v}$ is not identified by the complex conjugate of $v$ by default. Whenever the context dictates that we make $\c{v}$ equal to the complex conjugate of $v$, this will be stated explicitly. 

The Laurent series coefficients $\varphi_m(v,\c{v})$ in \eqref{symbol:Gessel} are well-defined: by splitting the exponent of the right hand side of \eqref{symbol:Gessel} into positive and negative parts, taking the exponentials separately, and collecting the $z$ terms results in a meaningful formal power series in $v$ and $\c{v}$:
\be
\varphi_m(v,\c{v}) = [z^m]\exp\left(\frac{v}{z}\right)\exp\left(\c{v}z\right)=[z^m]\sum_{k,l=0}^{\infty}\frac{v^k\c{v}^l z^{l-k} }{k!l !}
=\begin{cases}
\displaystyle \sum_{j=0}^{\infty}\frac{v^j\c{v}^{j+m}}{j!(j+m)!}, & \text{if $m>0$;}\\
\displaystyle \sum_{j=0}^{\infty}\frac{v^j\c{v}^j}{(j!)^2}, & \text{if $m=0$;}\\
\displaystyle \sum_{j=0}^{\infty}\frac{v^{j-m}\c{v}^j}{(j-m)!j!}, & \text{if $m<0$.}
\end{cases}
\ee
It is easy to see that $\varphi_m(x,x) = I_{|m|}(2x)$, and therefore \eqref{Gessel_modified} reduces to \eqref{Gessel_original} with $v=\c{v}=x$.

Before stating the main result, recall that the \emph{cycle index polynomial} of the symmetric group $S_r$ is the multivariate polynomial
\be
\label{cyc}
\Cyc_r(t_1,\ldots, t_r)=\frac{1}{r!}\sum_{|\mu|=r}\left|C_{\mu}\right|\prod_{i=1}^{\ell(\mu)}t_{\mu_i}
\ee
in the indeterminates $t_1,\ldots, t_r$, where the sum is over partitions $\mu$ of weight $|\mu|=r$, $C_{\mu}\subset S_{r}$ is the conjugacy class consisting of permutations of cycle type $\mu$ and $\ell(\mu)$ stands for the length (number of parts) of the partition $\mu$. For example, for $r=1,2,3$ the cycle index polynomials are
\begin{align}
\Cyc_1(t_1) &= t_1,\cr
\Cyc_2(t_1,t_2) &= \frac{1}{2}\left(t_1^2+t_2\right),\cr
\Cyc_3(t_1,t_2,t_3) &= \frac{1}{6}\left(t_1^3+3 t_1^2t_2+2 t_3\right).
\end{align}
Note that the cycle index polynomial $\Cyc_r$ is a weighted homogeneous polynomial of degree $r$ in the natural grading
$\deg(t_j) = j$. In what follows, we use the abbreviated notation $\bt = (t_1,t_2,\ldots)$.

\begin{theorem}
\label{thethm}
The sequence $(A_{d+1,r}(n))_{n \geq 0}$ has the formal series generating function
\be
\label{main_formula}
\sum_{n=0}^{\infty}\frac{A_{d+1,r}(n)}{(rn)!n!}(v\c{v})^{rn} = \frac{1}{\pi^{r}}\underbrace{\int_{\C}\cdots\int_{\C}}_{r}\det(T_d(v,\bt))\cdot \exp\left(\bar{v}^r\overline{\Cyc_{r}(\bt)}\right)\prod_{j=1}^{r}e^{-|t_j|^2}dA(t_j),
\ee
where $T_d(v,\bt)$ is the Toeplitz matrix
\be
(T_d(v,\bt))_{ij} =\varphi_{i-j}(v,\bt), \quad 1 \leq i, j \leq d,
\ee
associated with the formal Toeplitz symbol
\be
\varphi(v,\bt;z)=\sum_{m=-\infty}^{\infty}\varphi_{m}(v,\bt)z^m=\exp\left(\frac{v}{z}+ \sum_{j=1}^{r}t_j z^j\right),
\ee
and $dA$ stands for the area measure in the complex plane.
\end{theorem}
\begin{remark} In terms of the independent bookkeeping variables $v$ and $\bar{v}$,  the Toeplitz determinant $\det(T_d(v,\bt))$ and the exponentiated cycle index polynomial $\exp\left(\c{v}^r\overline{\Cyc_{r}(\bt)}\right)$ can both be expanded into formal power series, and the coefficients of the resulting product series
\[
\det(T_d(v,\bt))\exp\left(\c{v}^r\overline{\Cyc_{r}(\bt)}\right)= \sum_{k,l=0}^{\infty}F_k(\bt)\overline{G_l(\bt)}v^{k}\bar{v}^{l}
\]
are products of weighted homogeneous polynomials in $\bt$ and $\overline{\bt}$ and therefore their integrals with respect to multiple planar Gaussian measures in \eqref{main_formula} are well-defined.
\end{remark}
\begin{remark} I am grateful to M.\ Bertola for the following observation:
\begin{align}
\nonumber
\frac{1}{\pi^{r}}\underbrace{\int_{\C}\cdots\int_{\C}}_{r}t_1^{k_1}\cdots t_r^{k_r}\cdot \overline{t_1^{l_1}\cdots t_r^{l_r}}\prod_{j=1}^{r}e^{-|t_j|^2}dA(t_j) &=\prod_{j=1}^{r}\delta_{k_j l_j} k_j !\\
&= \langle t_1^{k_1}\cdots t_r^{k_r} , t_1^{l_1}\cdots t_r^{l_r}\rangle,
\end{align}
where $\langle\cdot,\cdot\rangle$ is Macdonald's scalar product on symmetric functions \cite[Eq.\ (4.7)]{Mac}, written in terms of the normalized power sum variables $t_i=p_i/i$. Therefore we can rephrase \eqref{main_formula} as
\be
\label{main_formula_scalar}
\sum_{n=0}^{\infty}\frac{A_{d+1,r}(n)}{(rn)!n!}(v\c{v})^{rn} = \left\langle \det(T_{d}(v,\bt)), \exp\left(\bar{v}^r\Cyc_{r}(\bt)\right)\right\rangle.
\ee
\end{remark}
\begin{remark} The last integral with respect to $t_r$ in \eqref{main_formula} can be replaced by the direct evaluation
\be
\label{eval_t_r}
\bar{t}_r=\frac{\bar{v}^r}{r}.
\ee
This follows from the integral identity
\be
\frac{1}{\pi}\int_{\C} P(t_r) \exp\left(\frac{\bar{v}^r}{r}\bar{t}_r - |t_r|^2\right)dA(t_r) = P\left(\frac{\bar{v}^{r}}{r}\right),
\ee
valid for all polynomials $P(t)$.
The $1/r$ factor in \eqref{eval_t_r} comes from
\be
[t_r]\Cyc_r(\bt)=\frac{1}{r!}\left|C_{(r)}\right| = \frac{(r-1)!}{r!}=\frac{1}{r}.
\ee
In particular, the generating function \eqref{main_formula} for $r=1$ reduces to Gessel's Toeplitz determinant formula \eqref{Gessel_modified} in terms of Bessel functions.
\end{remark}
\begin{remark}
For the case of double occurrences $r=2$, there is a \emph{quadrature identity}
\be
\frac{1}{\pi^2}\int_{\C}\int_{\C} Q(t_1,t_2) e^{\frac{\bar{v}^2}{2}\left(\bar{t}_1^2+\bar{t}_2\right) - |t_1|^2-|t_2|^2}dA(t_1)dA(t_2)=
\frac{1}{\sqrt{2\pi}}\int_{-\infty}^{\infty}Q\left(\bar{v}x,\frac{\bar{v}^2}{2}\right)e^{-\frac{x^2}{2}}dx
\ee
for polynomials $Q(t_1,t_2)$, assuming that $\left|\bar{v}^2/2\right|< 1$. Therefore, for $r=2$ our general formula \eqref{main_formula} reduces to
\be
\sum_{n=0}^{\infty}\frac{A_{d+1,2}(n)}{(2n)!n!}(v\c{v})^{2n} = \frac{1}{\sqrt{2\pi}}\int_{-\infty}^{\infty}\det(\hat T_d(v,\c{v},x))e^{-\frac{x^2}{2}}dx,
\label{r_2}
\ee
where $\hat T_d(v,\c{v},\bt)$ is the $d\times d$ Toeplitz determinant associated with the symbol
\be
\varphi(v,\bar{v},x;z)=\exp\left(\frac{v}{z}+ \bar{v}xz+\frac{\bar{v}^2}{2}z^2\right).
\ee
By setting $\bar{v}=1$ one can further simplify \eqref{r_2} to the integral involving the Toeplitz determinant of the simplified symbol
\be
\varphi(v,x;z)=\exp\left(\frac{v}{z}+ xz+\frac{1}{2}z^2\right).
\ee
\end{remark}
\begin{remark} The numbers $(A_{d+1,r}(n))_{n \geq 0}$ can be given a natural probabilistic interpretation described as follows. Consider the uniform measure on the set of words of length $rn$ in $n$ letters with each letter appearing exactly $r$ times, and let $L_{n,r}(w)$ denote the length of the longest increasing subsequence in a random word $w$, interpreted itself as a random variable. By the definition of $A_{d+1,r}(n)$ we have
\be
\operatorname{Prob}\left(L_{n,r}(w)\leq d\right) = \frac{A_{d+1,r}(n)}{\frac{(rn)!}{(r!)^n}},
\ee
and hence a slightly adapted version of \eqref{main_formula} can be interpreted as the ``Poissonization'' of the distributions of $L_{n,r}$:
\begin{multline}
\label{Poissonized}
e^{-\frac{|v|^{2r}}{r!}}\sum_{n=0}^{\infty}\operatorname{Prob}\left(L_{n,r}(w)\leq d\right)\frac{1}{n!}\left(\frac{|v|^{2r}}{r!}\right)^{n}\\
=\frac{e^{-\frac{|v|^{2r}}{r!}}}{\pi^{r}}\underbrace{\int_{\C}\cdots\int_{\C}}_{r}\det(T_d(v,\bt))\cdot \exp\left(\bar{v}^r\overline{\Cyc_{r}(\bt)}\right)\prod_{j=1}^{r}e^{-|t_j|^2}dA(t_j).
\end{multline}
Note that it is natural here to identify $\c{v}$ with the complex conjugate of $v$ in \eqref{Poissonized}.

In this probabilistic context, the formula \eqref{Poissonized} may turn out to be useful in a subsequent asymptotic analysis of  the probabilities $\operatorname{Prob}\left(L_{n,r}(w)\leq d\right)$ as $n \to \infty$ and $d \to \infty$ simultaneously, possibly via the Toeplitz--Fredholm techniques of Borodin, Okounkov, and Olshanski \cite{BOO}, combined with Johansson's de-Poissonization lemma \cite{J}.
\end{remark}

{\bf Outline.} In Sec.\ \ref{sec:schur} an integral representation of $g^{(r)}_{\lambda}$ is given in terms of the Schur polynomial $s_{\lambda}$ associated with the partition $\lambda$, which in turn leads to an integral formula for $A_{d+1,r}(n)$.
 Gessel's general Toeplitz determinant for length-restricted Cauchy--Littlewood-type sums over partitions is recalled in Sec.\ \ref{sec:gessel} and it is employed to complete the proof of Theorem \ref{thethm}. In Sec.\ \ref{sec:calculations} we illustrate how the abstract generating function can be used in practice to calculate $A_{d+1,r}(n)$ for small values of $d$, $r$, and $n$ by using standard symbolic computer algebra software.

\section{The Kostka coefficients $g_{\lambda}^{(r)}$ as integrals of Schur polynomials}
\label{sec:schur}
Following the standard approach in the theory of symmetric functions \cite{Mac}, one defines the Schur polynomials $s_{\lambda} \in \C[t_1,t_2,t_3,\ldots]$ 
in the normalized power sum variables $\bt = (t_1,t_2,\ldots)$ via the Jacobi-Trudi formula
\be
s_{\lambda}(\bt) =\det \left(h_{\lambda_i-i+j}(\bt)\right)_{1\leq i,j \leq \ell(\lambda)},
\ee
where the polynomials $h_k(\bt)$ (the complete symmetric polynomials) are obtained from the generating function
\be
\sum_{k=0}^{\infty}h_k(\bt)z^k = \exp\left(\sum_{j=1}^{\infty}t_j z^j\right),
\ee
with $h_k(\bt)=0$ for negative values of $k$. The polynomial $h_k(\bt)$ is a weighted homogeneous polynomial of degree $k$ in the natural grading $\deg(t_j) = j$.
Therefore $s_{\lambda}(\bt)$ is also a weighted homogeneous polynomial of degree $|\lambda|$ and hence it has the scaling property
\be
s_{\lambda}\left(a t_1,a^2t_2,a^3t_3,\ldots\right) = a^{|\lambda|}s_{\lambda}(t_1,t_2,t_3,\ldots).
\ee
It is well-known \cite{Mac} that the number of standard Young tableaux of shape $\lambda$ is given by the special evaluation
\be
f_{\lambda} = |\lambda|! s_{\lambda}\left(1,0,0,\ldots\right).
\label{f_lambda_schur}
\ee
As pointed out by Chapuy \cite{Ch}, the coefficient $g_{\lambda}^{(r)}$ is the Kostka number $K_{\lambda,(r^n)}$ associated with the pair of partitions
$\lambda$ and $(r^n)$, that is, $g_{\lambda}^{(r)}$ is the coefficient of $m_{(r^n)}$ in the expansion of $s_\lambda$ in the monomial symmetric function basis $(m_{\mu})_{\mu}$ of $\C[\bt]$:
\be
\label{Kostka}
g_{\lambda}^{(r)} = \left[x_1^rx_2^r\cdots x_n^r\right]s_{\lambda}(t_1(\bx),t_2(\bx),\ldots),
\ee
where
\be
\label{eq:norm_power_sums}
t_j(\bx) = \frac{1}{j}\sum_{i=1}^{n}x_i^j\qquad j=1,2,\ldots
\ee
are the \emph{normalized power sum symmetric polynomials} in the variables $x_1,\ldots, x_n$.

In what follows, the coefficient representation \eqref{Kostka} of $g_{\lambda}^{(r)}$ will be rewritten as an $r$-fold integral of the Schur polynomial associated with $\lambda$ in terms of the normalized power sum variables. To proceed in this direction we need the following basic but important 

\begin{proposition}
\label{prop:coeff}
 Let $P(t_1,t_2,\ldots, t_r)$ be a polynomial in the indeterminates $t_1,\ldots, t_r$ and consider the polynomial
 \be
 P\left(t_1(\bx),t_2(\bx),\ldots,t_r(\bx)\right) \in \C[x_1,\ldots,x_n]
 \ee
  in the indeterminates $x_1,\ldots, x_n$, where $t_k(\bx)$ are the normalized power sum symmetric polynomials \eqref{eq:norm_power_sums} in $\bx$. The following integral identity holds:
\begin{multline}
\label{eq:coeff}
[x_1^r x_2^r\cdots x_n^r] P\left(t_1(\bx),t_2(\bx),\ldots,t_r(\bx)\right) \\
=\frac{1}{\pi^{r}}\underbrace{\int_{\C}\cdots\int_{\C}}_{r}P(u_1,\ldots, u_r)\overline{h_r\left(u_1,\frac{u_2}{2},\ldots, \frac{u_r}{r},0,0,\ldots\right)}^{n}\prod_{j=1}^{r}e^{-|u_j|^2}dA(u_j),
\end{multline}
where $h_r\left(u_1,\frac{u_2}{2},\ldots, \frac{u_r}{r},0,0,\ldots\right)$ is the complete symmetric polynomial of degree $r$ evaluated at $\left(u_1,\frac{u_2}{2},\ldots, \frac{u_r}{r},0,0,\ldots\right)$
and $dA$ is the area measure in the complex plane.
\end{proposition}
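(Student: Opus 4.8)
The plan is to combine two ingredients: the complex Gaussian orthogonality relation recorded in the remarks above, namely $\frac{1}{\pi}\int_{\C}u^{k}\bar u^{l}e^{-|u|^2}\,dA(u)=\delta_{kl}\,k!$ and its $r$-fold product form $\frac{1}{\pi^{r}}\int_{\C^{r}}u_1^{k_1}\cdots u_r^{k_r}\,\overline{u_1^{l_1}\cdots u_r^{l_r}}\prod_{j}e^{-|u_j|^2}\,dA(u_j)=\prod_{j}\delta_{k_jl_j}\,k_j!$, together with a generating-function identity that linearises the $n$-th power $\overline{h_r(u_1,u_2/2,\dots,u_r/r,0,\dots)}^{\,n}$. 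Since both sides of \eqref{eq:coeff} are linear in $P$, I would first reduce to the case of a single monomial $P(t_1,\dots,t_r)=t_1^{k_1}\cdots t_r^{k_r}$; the general statement then follows from linearity of coefficient extraction and of the integral.

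For such a monomial, abbreviate $H(u_1,\dots,u_r):=h_r(u_1,u_2/2,\dots,u_r/r,0,0,\dots)$. Because $H$ has real (indeed rational) coefficients, the Gaussian orthogonality collapses the right-hand side of \eqref{eq:coeff}: the only surviving terms are those whose holomorphic multidegree $(k_1,\dots,k_r)$ matches the antiholomorphic multidegree coming from $\overline{H(u)^{\,n}}$, and the integral evaluates to $\bigl(\prod_{j=1}^{r}k_j!\bigr)\,[u_1^{k_1}\cdots u_r^{k_r}]\,H(u)^{n}$. Thus the proposition reduces to the purely algebraic identity
\[
[x_1^r\cdots x_n^r]\prod_{j=1}^{r}t_j(\bx)^{k_j}=\Bigl(\prod_{j=1}^{r}k_j!\Bigr)\,[u_1^{k_1}\cdots u_r^{k_r}]\,H(u)^{n}.
\]

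The key step is to evaluate $H(u)^{n}$. From the generating function $\sum_{k}h_k(\bt)z^{k}=\exp\bigl(\sum_{j}t_jz^{j}\bigr)$ one reads off $H(u)=[z^{r}]\exp\bigl(\sum_{j=1}^{r}\tfrac{u_j}{j}z^{j}\bigr)$, where terms with $j>r$ may be dropped as they cannot contribute to $[z^{r}]$. Introducing $n$ independent generating variables, which I take to be $x_1,\dots,x_n$, and using $\sum_{i=1}^{n}x_i^{j}=j\,t_j(\bx)$, I obtain
\[
H(u)^{n}=[x_1^r\cdots x_n^r]\prod_{i=1}^{n}\exp\Bigl(\sum_{j=1}^{r}\tfrac{u_j}{j}x_i^{j}\Bigr)=[x_1^r\cdots x_n^r]\exp\Bigl(\sum_{j=1}^{r}u_j\,t_j(\bx)\Bigr).
\]
Extracting the coefficient of $u_1^{k_1}\cdots u_r^{k_r}$ from $\exp\bigl(\sum_{j}u_j t_j(\bx)\bigr)=\prod_{j}\exp\bigl(u_j t_j(\bx)\bigr)$ produces the factor $\prod_{j}t_j(\bx)^{k_j}/k_j!$, so that $[u_1^{k_1}\cdots u_r^{k_r}]\,H(u)^{n}=\frac{1}{\prod_{j}k_j!}\,[x_1^r\cdots x_n^r]\prod_{j}t_j(\bx)^{k_j}$; multiplying by $\prod_{j}k_j!$ reproduces the left-hand side exactly and closes the argument.

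I expect the only genuine obstacle to be the manipulation in the last display: recognising that the $n$-th power of the single coefficient $H(u)$ can be rewritten as one coefficient extraction in $n$ independent variables, which is precisely what decouples the $u_j$-dependence into the linear exponent $\sum_{j}u_j\,t_j(\bx)$ that the Gaussian pairing can integrate. Convergence and well-definedness require no care, since for a polynomial $P$ every integrand is a polynomial times a Gaussian weight; and weighted homogeneity (both sides vanish unless $\sum_{j}j\,k_j=rn$) gives a reassuring consistency check.
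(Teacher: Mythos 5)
Your proof is correct and takes essentially the same route as the paper: both arguments rest on Gaussian orthogonality together with the identity $h_r\left(u_1,\frac{u_2}{2},\dots,\frac{u_r}{r},0,\dots\right)^{n}=[x_1^r\cdots x_n^r]\exp\left(\sum_{j=1}^{r}u_j t_j(\bx)\right)$, obtained by factoring the exponential over the independent variables $x_1,\dots,x_n$ and extracting the coefficient variable by variable. The paper merely runs the computation in the opposite order---it first forms the exponential generating function of the coefficients $a^{n,r}_{k_1,\dots,k_r}=[x_1^r\cdots x_n^r]\,t_1(\bx)^{k_1}\cdots t_r(\bx)^{k_r}$ and recognizes it as the $n$-th power of the cycle index, then applies orthogonality---whereas you collapse the integral first; the mathematical content is identical.
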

\begin{remark} Chapuy \cite{Ch} found a multiple integral formula for $g_{\lambda}^{(r)}$ involving $h_r^n$, expressed in terms of the original indeterminates $x_j$. Our identity \eqref{eq:coeff}, expressed in terms of the normalized power sum symmetric polynomials $t_k(\bx)$, has essentially the same combinatorial content. However, as it will be shown below, our choice of parametrization turns out to be more advantageous in finding a generating function for the numbers $A_{d+1,r}(n)$.
\end{remark}
\begin{remark}
The symmetric function $h_r^n$ appears in a closely related problem on the enumeration of a certain class of integer matrices: Everett and Stein \cite{ES} cite the following ``classical result'' attributed to MacMahon \cite{MacM}: the number of $n\times n$ matrices with non-negative integer entries whose row and column sums are all equal to $r$ is given by the coefficient of $m_{(r^n)}$ in the expansion of $h_r^n$ in the monomial symmetric function basis.
\end{remark}
\begin{proof}[Proof of Lemma \ref{prop:coeff}]
For $r$-tuples of non-negative integers $(k_1,k_2,\ldots, k_r)$, let
\be
a^{n,r}_{k_1,\ldots, k_r} =[x_1^r x_2^r\cdots x_n^r]t_1(\bx)^{k_1}\cdots t_r(\bx)^{k_r}.
\ee
The exponential generating function of the numbers $a^{n,r}_{k_1,\ldots, k_r}$ can be calculated as follows:
\begin{align}
\sum_{k_1,\ldots, k_r=0}^{\infty} a^{n,r}_{k_1,\ldots, k_r}\prod_{j=1}^{r}\frac{u_j^{k_j}}{k_j!}&=[x_1^r x_2^r\cdots x_n^r]\sum_{k_1,\ldots, k_r=0}^{\infty} \prod_{j=1}^{r}\frac{\left(t_j(\bx)u_j\right)^{k_j}}{k_j!}\cr
&=[x_1^r x_2^r\cdots x_n^r]\exp\left(\sum_{j=1}^{r}t_j(\bx)u_j\right)\cr
&=[x_1^r x_2^r\cdots x_n^r]\prod_{i=1}^{n}\exp\left(\sum_{j=1}^{r}\frac{x_i^j}{j}u_j\right)\cr
&=\prod_{i=1}^{n}[x_i^r]\exp\left(\sum_{j=1}^{r}\frac{x_i^j}{j}u_j\right)\cr
&=\left(h_r\left(u_1,\frac{u_2}{2},\ldots, \frac{u_r}{r},0,0,\ldots\right)\right)^{n}.
\end{align}
Since
\be
\frac{1}{\pi}\int_{\C}z^{k}\c{z}^{l}e^{-|z|^2}dA(z) = \delta_{kl}k!
\ee
for all $k,l \geq 0$, we have
\begin{multline}
\frac{1}{\pi^{r}}\underbrace{\int_{\C}\cdots\int_{\C}}_{r}u_1^{k_1}\cdots u_r^{k_r}\overline{\left(h_r\left(u_1,\frac{u_2}{2},\ldots, \frac{u_r}{r},0,0,\ldots\right)\right)^{n}}\prod_{j=1}^{r}e^{-|u_j|^2}dA(u_j)\\
=a^{n,r}_{k_1,\ldots, k_r} = [x_1^{r}x_2^{r}\cdots x_n^{r}]t_1(\bx)^{k_1}\cdots t_r(\bx)^{k_r}
\end{multline}
for all $r$-tuples $(k_1,k_2,\ldots, k_r)$,  and hence \eqref{eq:coeff} follows by taking linear combinations.
\end{proof}
\begin{corollary} For a partition of weight $|\lambda|=rn$ we have
\be
g^{(r)}_{\lambda}=\frac{1}{\pi^{r}}\underbrace{\int_{\C}\cdots\int_{\C}}_{r}s_{\lambda}(t_1,\ldots, t_r, 0,\ldots)\overline{\left(h_r\left(t_1,\frac{t_2}{2},\ldots, \frac{t_r}{r},0,\ldots\right)\right)^{n}}\prod_{j=1}^{r}e^{-|t_j|^2}dA(t_j).
\ee
\end{corollary}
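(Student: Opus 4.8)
The plan is to combine the coefficient representation \eqref{Kostka} of $g^{(r)}_{\lambda}$ with the integral identity of Proposition~\ref{prop:coeff}. The only gap to bridge is that $s_{\lambda}$ is a polynomial in the infinitely many normalized power sum variables $t_1,t_2,t_3,\dots$, whereas Proposition~\ref{prop:coeff} applies to a polynomial $P(t_1,\dots,t_r)$ in the first $r$ of them only. So the first thing I would do is argue that, as far as the coefficient extraction $[x_1^r x_2^r\cdots x_n^r]$ is concerned, the variables $t_j$ with $j>r$ may be set to zero, thereby converting $s_{\lambda}$ into an honest polynomial in $t_1,\dots,t_r$ to which the proposition applies.

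To justify this truncation I would expand $s_{\lambda}$ as a linear combination of monomials $\prod_j t_j^{a_j}$ and track how each such monomial contributes after the substitution $t_j\mapsto t_j(\bx)=\frac{1}{j}\sum_{i=1}^{n}x_i^{j}$. Upon expanding $\prod_j t_j(\bx)^{a_j}$, every resulting term is a product of factors of the form $x_i^{j}$, and to produce the target monomial $x_1^r x_2^r\cdots x_n^r$ each $x_i$ must appear with total exponent exactly $r$. If some $a_j>0$ with $j>r$, then at least one factor $x_i^{j}$ with $j>r$ is present, which already forces the exponent of that particular $x_i$ to exceed $r$; such a term therefore cannot contribute to $[x_1^r\cdots x_n^r]$. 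Consequently every monomial of $s_{\lambda}$ involving some $t_j$ with $j>r$ drops out, and $s_{\lambda}(t_1(\bx),t_2(\bx),\dots)$ may be replaced by the truncation $s_{\lambda}(t_1(\bx),\dots,t_r(\bx),0,0,\dots)$ without changing the coefficient.

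With the truncation established, I would set $P(t_1,\dots,t_r):=s_{\lambda}(t_1,\dots,t_r,0,0,\dots)$, which is a polynomial in $t_1,\dots,t_r$, and apply Proposition~\ref{prop:coeff} verbatim to this $P$. The right-hand side of \eqref{eq:coeff} then produces exactly the claimed $r$-fold Gaussian integral after renaming the integration variables $u_j$ back to $t_j$. I would note as a sanity check that the weighted homogeneity of $s_{\lambda}$ (of degree $|\lambda|=rn$) makes $s_{\lambda}(t_1(\bx),t_2(\bx),\dots)$ homogeneous of ordinary degree $rn$ in the $x_i$, which matches the degree $rn$ of $x_1^r\cdots x_n^r$, so the coefficient is not forced to vanish for trivial degree reasons.

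I expect the truncation argument of the second paragraph to be the only genuinely substantive step, and hence the main obstacle: once it is granted that $t_j=0$ for $j>r$ leaves the coefficient unchanged, the corollary is an immediate specialization of Proposition~\ref{prop:coeff} requiring no further computation.
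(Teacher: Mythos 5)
Your proposal is correct and takes essentially the same route as the paper, which states this corollary without proof as an immediate specialization of Proposition~\ref{prop:coeff} to $P(t_1,\dots,t_r)=s_{\lambda}(t_1,\dots,t_r,0,\dots)$ combined with the coefficient representation \eqref{Kostka}. Your truncation argument --- that any monomial of $s_{\lambda}$ containing a factor $t_j$ with $j>r$ forces some $x_i$ to appear with exponent at least $j>r$ and hence cannot contribute to $[x_1^r x_2^r\cdots x_n^r]$ --- correctly supplies the one step the paper leaves tacit.
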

Note that for a pair of weighted homogeneous polynomials $P$ and $Q$,
\be
\frac{1}{\pi^{r}}\underbrace{\int_{\C}\cdots\int_{\C}}_{r}P(t_1,\ldots,t_r)\overline{Q(t_1,\ldots,t_r)}\prod_{j=1}^{r}e^{-|t_j|^2}dA(t_j)=0 \quad \text{unless}\quad \deg P=\deg Q.
\ee

Since $s_{\lambda}$ and $h_r^{m}$ are weighted homogeneous polynomials with $\deg(s_{\lambda}) = |\lambda|$ and $\deg(h_r^n) = rm$ we have
\be
\label{pre_exponential}
\frac{1}{\pi^{r}}\underbrace{\int_{\C}\cdots\int_{\C}}_{r}s_{\lambda}(t_1,\ldots, t_r, 0,\ldots)\overline{\left(h_r\left(t_1,\frac{t_2}{2},\ldots, \frac{t_r}{r},0,\ldots\right)\right)^{m}}\prod_{j=1}^{r}e^{-|t_j|^2}dA(t_j)=\delta_{|\lambda|,rm}g_{\lambda}^{r}.
\ee
\begin{remark} For example, for $r=1,2,3$ we have
\begin{align*}
h_1\left(u_1,0,\ldots\right)&= u_1\\
h_2\left(u_1,\frac{u_2}{2},0,\ldots\right)&= \frac{1}{2}u_1^2+\frac{1}{2}u_2\\
h_3\left(u_1,\frac{u_2}{2},\frac{u_3}{3},0,\ldots\right)&= \frac{1}{6}u_1^3+\frac{1}{2}u_1^2u_2+\frac{1}{3}u_3.
\end{align*}
The general formula \cite[Eq.\ ($2.14^\prime$)]{Mac} is
\be
h_r\left(u_1,\frac{u_2}{2},\ldots,\frac{u_r}{r},0,\ldots\right)=\sum_{|\mu|=r} \frac{1}{z_{\mu}}\prod_{i=1}^{\ell(\mu)} u_{\mu_i},
\ee
where $z_{\mu}$ is the standard constant factor
\be
z_{\mu} = \prod_{i\geq 1}i^{m_i}\cdot m_i!
\ee
in which $m_i=m_i(\mu)$ is number of parts of the partition $\mu$ equal to $i$. Note that
\be
\left|C_{\mu}\right| = \frac{r!}{z_{\mu}} \quad \text{and} \quad |\mu|=r,
\ee 
where $C_{\mu}$ is the conjugacy class of permutations of cycle type $\mu$ in the symmetric group $S_r$.
Hence $h_r\left(u_1,\frac{u_2}{2},\ldots, \frac{u_r}{r},0,0,\ldots\right)$ can be written alternatively as the cycle index polynomial of $S_r$:
\be
\label{eq:cycle_index}
h_r\left(u_1,\frac{u_2}{2},\ldots,\frac{u_r}{r},0,\ldots\right)= \frac{1}{r!}\sum_{|\lambda|=r} \left|C_{\lambda}\right|\prod_{i=1}^{\ell(\lambda)} u_{\lambda_i} = \Cyc_r(u_1,u_2,\ldots, u_r).
\ee
\end{remark}

By taking into account \eqref{pre_exponential} and \eqref{eq:cycle_index} we obtain the following:
\begin{corollary}
\be
\frac{1}{\pi^{r}}\underbrace{\int_{\C}\cdots\int_{\C}}_{r}s_{\lambda}(\bt)\exp\left(\bar{v}^r\overline{\Cyc_{r}(\bt)}\right)\prod_{j=1}^{r}e^{-|t_j|^2}dA(t_j) 
=
\begin{cases}
\dfrac{\bar{v}^{rn}}{n!}g_{\lambda}^{(r)}, & \text{if $|\lambda|=rn$;}\\
&\\
0, & \textrm{otherwise.}
\end{cases}
\label{cropping}
\ee
\end{corollary}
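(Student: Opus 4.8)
The plan is to reduce the statement directly to the orthogonality relation \eqref{pre_exponential} by expanding the exponential weight into its Taylor series and integrating term by term. First I would use the identity \eqref{eq:cycle_index}, which records that the cycle index polynomial is exactly the specialized complete symmetric polynomial, $\Cyc_r(t_1,\dots,t_r)=h_r\left(t_1,\tfrac{t_2}{2},\dots,\tfrac{t_r}{r},0,\dots\right)$. Substituting this into the exponent and expanding gives
\[
e^{\bar{v}^r\overline{\Cyc_r(\bt)}}=\sum_{m=0}^{\infty}\frac{\bar{v}^{rm}}{m!}\,\overline{h_r\left(t_1,\tfrac{t_2}{2},\dots,\tfrac{t_r}{r},0,\dots\right)}^{\,m},
\]
so that the left-hand side of \eqref{cropping} becomes, after interchanging summation and integration, the series $\sum_{m\geq 0}\frac{\bar{v}^{rm}}{m!}\,I_m$, where $I_m$ is precisely the integral appearing on the left of \eqref{pre_exponential}.

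Second, I would invoke \eqref{pre_exponential} to evaluate each $I_m=\delta_{|\lambda|,rm}\,g_\lambda^{(r)}$. This is where the weighted-homogeneity observation preceding \eqref{pre_exponential} does the real work: the planar Gaussian pairing annihilates $s_\lambda$ against $h_r^m$ whenever their weighted degrees $|\lambda|$ and $rm$ differ, leaving at most one surviving term. Collecting, the series collapses to $\frac{\bar{v}^{rn}}{n!}g_\lambda^{(r)}$ when $|\lambda|=rn$ for some nonnegative integer $n$, and to $0$ when $|\lambda|$ is not a multiple of $r$, which is exactly the claimed two-case formula.

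The only point requiring care — and the step I expect to be the main obstacle — is the legitimacy of exchanging the infinite sum with the $r$-fold planar integral. This is not an issue of delicate analysis but one that should be dispatched cleanly: since $s_\lambda$ and $h_r^m$ are weighted-homogeneous polynomials, the multidegree selection rule forces $I_m=0$ for every $m$ except the unique index $m=|\lambda|/r$ (when it is an integer), so the series is in fact a finite sum and the interchange is automatic. Alternatively, one can note that for fixed $\lambda$ the integrand is dominated, monomial by monomial, by a product of Gaussians against which every moment integral converges absolutely, so dominated convergence applies. Either justification completes the proof.
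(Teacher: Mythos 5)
Your proof is correct and follows exactly the route the paper intends: the corollary is stated there as an immediate consequence of \eqref{pre_exponential} and \eqref{eq:cycle_index}, and your write-up simply makes that explicit by Taylor-expanding the exponential and applying \eqref{pre_exponential} termwise. Your observation that the weighted-homogeneity selection rule makes the sum effectively finite (so the interchange of sum and integral is automatic) is a clean and complete justification of the one step the paper leaves implicit.
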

By the RSK formula \eqref{RSK} this leads to an integral representation of $A_{d+1,r}(n)$:
\begin{lemma} The following identity holds for all non-negative integers $d$, $r$, and $n$:
\be
\frac{A_{d+1,r}(n)}{(rn)! n!}(v\c{v})^{rn}=\\ 
\frac{1}{\pi^{r}}\underbrace{\int_{\C}\cdots\int_{\C}}_{r}\left(\sum_{\substack{|\lambda|=rn\\ \ell(\lambda)\leq d}}s_{\lambda}(v,0,\ldots)s_{\lambda}(\bt)\right)
\exp\left(\bar{v}^r\overline{\Cyc_r(\bt)}\right)\prod_{j=1}^{r}e^{-|t_j|^2}dA(t_j).
\label{int_A}
\ee
\end{lemma}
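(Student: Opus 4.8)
The plan is to obtain \eqref{int_A} by feeding the two combinatorial ingredients of the RSK decomposition $A_{d+1,r}(n)=\sum_{|\lambda|=rn,\,\ell(\lambda)\leq d}f_\lambda g_\lambda^{(r)}$ into the integral machinery already built up, namely the ``cropping'' Corollary \eqref{cropping}. The factor $g_\lambda^{(r)}$ will be absorbed verbatim by \eqref{cropping}, while $f_\lambda$ will be converted into the factor $s_\lambda(v,0,\dots)$ using only the homogeneity of the Schur polynomials. Since the sum over $\lambda$ is finite, I expect no analytic subtlety; the whole argument is a matter of matching factors and exchanging a finite sum with the integral.

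First I would dispose of the $v$-dependent prefactor. Because $|\lambda|=rn$, the evaluation $f_\lambda=|\lambda|!\,s_\lambda(1,0,0,\dots)$ gives $f_\lambda/(rn)!=s_\lambda(1,0,\dots)$, and the scaling property $s_\lambda(at_1,a^2t_2,\dots)=a^{|\lambda|}s_\lambda(t_1,t_2,\dots)$ evaluated at $(1,0,\dots)$ with $a=v$ turns this into $v^{rn}s_\lambda(1,0,\dots)=s_\lambda(v,0,\dots)$. Treating $v$ and $\bar v$ as independent book-keeping variables and splitting $|v|^{2rn}=v^{rn}\bar v^{rn}$, I would thus rewrite the left-hand side of \eqref{int_A} as
\be
\frac{A_{d+1,r}(n)}{(rn)!\,n!}|v|^{2rn}=\sum_{\substack{|\lambda|=rn\\ \ell(\lambda)\leq d}} s_\lambda(v,0,\dots)\,\frac{\bar v^{rn}}{n!}\,g_\lambda^{(r)},
\ee
in which the $v$-part has been packaged into $s_\lambda(v,0,\dots)$ and the remaining factor $\bar v^{rn}g_\lambda^{(r)}/n!$ is precisely the right-hand side of \eqref{cropping}.

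The final step is then to substitute \eqref{cropping} for $\bar v^{rn}g_\lambda^{(r)}/n!$ term by term and to pull the (finite) $\lambda$-sum inside the $r$-fold Gaussian integral, which reconstitutes the bracketed bilinear sum $\sum_\lambda s_\lambda(v,0,\dots)\,s_\lambda(t_1,\dots,t_r,0,\dots)$ weighted by $e^{\bar v^r\overline{\Cyc_r(t_1,\dots,t_r)}}\prod_j e^{-|t_j|^2}$, exactly as claimed. The only thing to watch --- and the closest the argument comes to an ``obstacle'' --- is organizational: one must keep the holomorphic $v$ (entering through the homogeneity of $s_\lambda$) cleanly separated from the antiholomorphic $\bar v$ (absorbed by \eqref{cropping}), and note that the weighted degree $|\lambda|=rn$ selected by the Gaussian integral matches the degree of $s_\lambda$, so that the ``otherwise'' branch of \eqref{cropping} never interferes with the length-restricted sum.
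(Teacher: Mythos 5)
Your proof is correct and follows essentially the same route as the paper: both convert $f_{\lambda}v^{rn}/(rn)!$ into $s_{\lambda}(v,0,\dots)$ (you derive this identity from homogeneity, the paper cites it directly), apply the cropping corollary \eqref{cropping} to absorb $\bar{v}^{rn}g_{\lambda}^{(r)}/n!$, and exchange the finite $\lambda$-sum with the $r$-fold Gaussian integral. No gaps.
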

\begin{proof} By using the scaled version of \eqref{f_lambda_schur} of the form
\be
\frac{f_{\lambda}}{|\lambda|!}v^{|\lambda|} = s_{\lambda}(v,0,0,\ldots),
\ee
we obtain
\begin{align}
\frac{A_{d+1,r}(n)}{(rn)! n!}(v\c{v})^{rn}
&=\sum_{\substack{|\lambda|=rn\\ \ell(\lambda)\leq d}}\frac{f_{\lambda}}{|\lambda|!}v^{|\lambda|}\cdot \frac{g_{\lambda}^{(r)}}{n!}\bar{v}^{rn}\cr
&=\sum_{\substack{|\lambda|=rn\\ \ell(\lambda)\leq d}}s_{\lambda}(v,0,\ldots)\cdot \frac{1}{\pi^{r}}\underbrace{\int_{\C}\cdots\int_{\C}}_{r}s_{\lambda}(\bt)\exp\left(\bar{v}^r\overline{\Cyc_r(\bt)}\right)\prod_{j=1}^{r}e^{-|t_j|^2}dA(t_j)\cr
&=\frac{1}{\pi^{r}}\underbrace{\int_{\C}\! \cdots\! \int_{\C}}_{r}\left(\sum_{\substack{|\lambda|=rn\\ \ell(\lambda)\leq d}}s_{\lambda}(v,0,\ldots)s_{\lambda}(\bt)\right)
\exp\left(\bar{v}^r\overline{\Cyc_r(\bt)}\right)\prod_{j=1}^{r}e^{-|t_j|^2}dA(t_j).\cr
\end{align}
\end{proof}

\section{Gessel's Toeplitz determinant identity and the proof of the main theorem}
\label{sec:gessel}

Recall Gessel's determinant formula \cite{Ge}: the length-restricted Cauchy--Littlewood-type sum of products of Schur functions can be written as
\be
\label{Gessel_det}
\sum_{\ell(\lambda)\leq d} s_{\lambda}({\mathbf t})s_{\lambda}({\mathbf s})= \det(G_d(\bt,\bs)),
\ee
where $\bt=(t_1,t_2,\ldots), {\mathbf s}=(s_1,s_2,\ldots)$, and $G_d(\bt,\bs)$ is the $d\times d$ Toeplitz determinant associated with the symbol
\be
\varphi(\bt,\bs; z)=\exp\left(\sum_{k=1}^{\infty} t_k z^{k}+\sum_{k=1}^{\infty}s_k z^{-k}\right)=\sum_{m=-\infty}^{\infty} \varphi_{m}({\mathbf t},{\mathbf s}) z^{m}.
\ee
This ``master identity'', combined with the integral representation for $A_{d+1,r}(n)$ is sufficient to prove the main result of this paper.
\begin{proof}[Proof of Theorem~\ref{thethm}] Gessel's determinantal formula \eqref{Gessel_det} with
\be
\bt =(t_1,\ldots,t_r,0,,0\ldots) \quad \text{and}\quad \bs=(v,0,0,\ldots)
\ee
gives
\be
\sum_{\ell(\lambda)\leq d}s_{\lambda}(v,0,\ldots)s_{\lambda}(t_1,\ldots, t_r, 0,\ldots) = \det(T_d(v,\bt))
\ee
where $T_d(v,\bt)$ is the $d\times d$ Toeplitz determinant associated with the symbol
\be
\varphi(v,\bt;z)=\sum_{m=-\infty}^{\infty}\varphi_{m}(v,\bt)z^m=\exp\left(\frac{v}{z}+ \sum_{j=1}^{r}t_j z^j\right).
\ee
Therefore
\begin{align}
&\frac{1}{\pi^{r}}\underbrace{\int_{\C}\cdots\int_{\C}}_{r}\det(T_d(v,\bt))\cdot \exp\left(\bar{v}^r\overline{\Cyc_{r}(\bt)}\right)\prod_{j=1}^{r}e^{-|t_j|^2}dA(t_j)\cr
&=\frac{1}{\pi^{r}}\underbrace{\int_{\C}\cdots\int_{\C}}_{r}\left(\sum_{\ell(\lambda)\leq d}s_{\lambda}(v,0,\ldots)s_{\lambda}(t_1,\ldots, t_r, 0,\ldots)\right)\cdot \exp\left(\bar{v}^r\overline{\Cyc_{r}(\bt)}\right)\prod_{j=1}^{r}e^{-|t_j|^2}dA(t_j)\cr
\label{lastline}
&=\frac{1}{\pi^{r}}\underbrace{\int_{\C}\cdots\int_{\C}}_{r}\sum_{n=0}^{\infty}\left(\sum_{\substack{\ell(\lambda)\leq d\\|\lambda|=rn}}s_{\lambda}(v,0,\ldots)s_{\lambda}(t_1,\ldots, t_r, 0,\ldots)\right)\cdot \exp\left(\bar{v}^r\overline{\Cyc_{r}(\bt)}\right)\prod_{j=1}^{r}e^{-|t_j|^2}dA(t_j),\cr
\end{align}
where in the last step we used the simple observation that, by \eqref{cropping}, a partition $\lambda$ gives a non-trivial contribution to the integral only if its weight $|\lambda|$ is divisible by $r$. The proof is concluded by comparing \eqref{lastline} with the integral representation \eqref{int_A} of $A_{d+1,r}(n)$.
\end{proof}

\section{Calculation of $A_{d+1,r}(n)$ using symbolic computer algebra}
\label{sec:calculations}
The purpose of this section is to illustrate that the generating function \eqref{main_formula} can be used to calculate $A_{d+1,r}(n)$ for small values of $d$, $r$, and $n$ using basic symbolic computer algebra. There was no attempt made to optimize the algorithms used in the symbolic computation; our goal was simply to demonstrate that the generating function is not only a compact and abstract formula, but it can also be put to work in practice. This also provides numerical evidence that the series \eqref{main_formula} does give the integer values it is supposed to, which is, after all, the most rewarding part of finding a generating function.

Below we use the short-hand notation $\bt_r = (t_1,\ldots, t_r)$. It is easy to see that the Laurent series coefficients of the Toeplitz symbol $\varphi(v,\bt_r;z)$ are
\begin{align}
\varphi_{m}(v,\bt_r) &= \sum_{k=0}^{\infty}h_{k+m}(\bt_r)\frac{v^k}{k!}\quad m=0,1,2,\ldots\\
\varphi_{-m}(v,\bt_r) &= \sum_{k=m}^{\infty}h_{k-m}(\bt_r)\frac{v^k}{k!}\quad m=1,2,\ldots,
\end{align}
where $h_k(\bt_r)$ are the complete symmetric polynomials in $\bt_r$.
We can define the truncated Toeplitz entries
\begin{align}
\varphi^{(N)}_{m}(v,\bt_r) &= \sum_{k=0}^{rN}h_{k+m}(\bt_r)\frac{v^k}{k!}\quad m=0,1,2,\ldots\\
\varphi^{(N)}_{-m}(v,\bt_r) &= \sum_{k=m}^{rN}h_{k-m}(\bt_r)\frac{v^k}{k!}\quad m=1,2,\ldots
\end{align}
so that
\be
\varphi_{m}(v,\bt_r) = \varphi^{(N)}_{m}(v,\bt_r) +\order{v^{rN+1}}.
\ee
For small values of $d$ the Toeplitz determinant with $\order{v^{rN+1}}$-truncated entries can be evaluated, and it can be further simplified by eliminating the terms of $\order{v^{rN+1}}$. We note that the evaluation of the determinant can be done much more efficiently by employing the division-free algorithm suggested by Ekhad, Shar, and Zeilberger \cite{ESZ}.

The expansion of the second factor $e^{\c{v}^r\Cyc_t(\c{t}_1,\ldots,\c{t}_r)}$ in the integral formula \eqref{main_formula} in $\c{v}$ is of the form
\be
\exp\left(\c{v}^r\Cyc_t(\c{t}_1,\ldots,\c{t}_r)\right)
=E_r^{(N)}(\bar{t}_1,\bar{t}_2/2,\ldots, \bar{t}_r/r;\bar{v})+\order{\bar{v}^{rN+r}},
\ee
where
\be
E_r^{(N)}(\bar{t}_1,\bar{t}_2/2,\ldots, \bar{t}_r/r;\bar{v})=\sum_{k=0}^{N} h_r(\bar{t}_1,\bar{t}_2/2,\ldots, \bar{t}_r/r,0,0)^k \frac{\bar{v}^{kr}}{k!}.
\ee
Therefore, by replacing the Toeplitz determinant and the exponential factor with their truncated forms in the integral \eqref{main_formula}, we get
\begin{multline}
 \frac{1}{\pi^{r}}\underbrace{\int_{\C}\cdots\int_{\C}}_{r}\det\left(T^{(N)}_{d}(v,\bt_r)\right)
E_r^{(N)}(\bar{t}_1,\bar{t}_2/2,\ldots, \bar{t}_r/r;\bar{v})\prod_{j=1}^{r}e^{-|t_j|^2}dA(t_j)\\
=\sum_{n=0}^{N}\frac{A_{d+1,r}(n)}{(rn)!n!}(v\c{v})^{rn} +\order{(v\c{v})^{rN+r}}.
\end{multline}
That is, the terms of order at most $rN$ in the product $v\c{v}$ can be trusted from the integral of the truncated expression.

The computation of area integrals can be implemented easily by using polar coordinates
\be
\frac{1}{\pi}\int_{\C}F(t)e^{-|t|^2}dA(t) = 2\int_{0}^{\infty}\left(\frac{1}{2\pi i}\oint_{|u|=1}F(ru)\frac{du}{u}\right)e^{-r^2}rdr.
\ee
The contour integrals are computed by taking residues at $u=0$ (as $F(t)$ in the integrand is always a polynomial in $t$ in our setting).

This simple method was implemented in Maple, and we obtained the values shown in Tables \ref{tab:r=2}, \ref{tab:r=3}, and  \ref{tab:r=4}. These all coincide with the numbers presented in the various appendices to Zeilberger's online journal entry \cite{EZ}.

\begin{table}[htb!]
\begin{center}
\small
\begin{tabular}{c||c|c|c|c|c}
$n\backslash d$&$1$ & $2$ &$3$ &$4$ &$5$\\
\hline
\hline
1 & 1&1&1&1&1\\
2 & 1&6&6&6&6\\ 
3& 1&43&90&90&90\\ 
4& 1&352&1879&2520&2520\\ 
5& 1&3114&47024&102011&113400\\
6& 1&29004&1331664&5176504&7235651\\
7& 1&280221&41250519&307027744&592616287\\ 
8& 1&2782476&1367533365&20472135280&58255807971\\ 
9& 1&28221784&47808569835&1496594831506&6585311137855\\ 
10& 1&291138856&1744233181074&117857270562568&832218817076725
\end{tabular}
\end{center}
\caption{The calculated values of $A_{d+1,2}(n)$.}
\label{tab:r=2}
\end{table}

\begin{table}[htb!]
\begin{center}
\begin{tabular}{c||c|c|c|c|c}
$n \backslash d$ &$1$ & $2$ &$3$ &$4$ &$5$\\
\hline
\hline
1 & 1&1&1&1&1\\
2 & 1&20&20&20&20\\
3 & 1&374&1680&1680&1680\\
4 & 1&8124&173891&369600&369600\\ 
5 & 1&190893&21347262&117392909&168168000\\ 
6 & 1&4727788&2977892253&46121962742&117108036719\\
7 & 1&121543500&455912368540&21198300356500&105795227339731\\
8 & 1&3212914524&74876841353159&11003612776114008&115061550940847029
\end{tabular}
\end{center}
\caption{The calculated values of $A_{d+1,3}(n)$.}
\label{tab:r=3}
\end{table}

\begin{table}[htb!]
\begin{center}
\begin{tabular}{c||c|c|c|c}

$n\backslash d$ &$1$ & $2$ &$3$ &$4$\\
\hline
1 &1&1&1&1\\ 
2 & 1&70&70&70\\ 
3 & 1&3199&34650&34650\\
4 & 1&173860&16140983&63063000\\ 
5 & 1&10203181&8854463421&142951955371\\ 
6 & 1&631326526&5532980565456&389426248416626
\end{tabular}
\end{center}
\caption{The calculated values of $A_{d+1,4}(n)$.}
\label{tab:r=4}
\end{table}

\section{Conclusion and outlook}
In this paper we presented the generating function \eqref{main_formula} to solve the challenge posed by Ekhad and Zeilberger \cite{EZ} to generalize Gessel's determinant formula \eqref{Gessel_original} for $r=1$ to arbitrary values of the multiplicity $r$. The key step in constructing such a generating function was to express the Kostka coefficient $g_{\lambda}^{(r)}$ as an integral of the Schur function $s_\lambda$, and use Gessel's ``master identity'' to sum up the resulting Cauchy--Littlewood-type terms under the integrals.

We expect that the general formula \eqref{main_formula} may be useful to find the associated linear differential equation with  polynomial coefficients for the generating function, or equivalently, to find  the recurrence relation with polynomial coefficients for $A_{d+1,r}(n)$ for arbitrary values of $r$ in a closed form.

Moreover, one can easily construct generating functions analogous to \eqref{main_formula} for words with different types of multiple occurrences in its letters.

It may also be interesting to investigate the asymptotic distribution obtained from a fine-tuned scaling limit of the probability $\operatorname{Prob}\left(L_{n,r}(w)\leq d\right)$ as $n,d \to \infty$.

\section{Acknowledgments}
Many thanks to Doron Zeilberger for posting the challenge of generalizing Gessel's generating function, his encouragement and kind feedback on a previous version of this manuscript, along with a generous donation to the OEIS in my name (see \url{https://oeisf.org/donate/#2015}). 
I am grateful to Marco Bertola for his interest in this project and helpful discussions, and, in particular, for pointing out the standard scalar product interpretation \eqref{main_formula_scalar} of the integral formulae. 
The thoughtful comments and suggestions of the editor and the anonymous referee helped a lot to improve the manuscript and made the paper much easier to read.

This work was supported in part by the \emph{Fonds de recherche du Qu\'ebec --- Nature et technologies} (FRQNT) via the research grant \emph{Matrices Al\'eatoires, Processus Stochastiques et Syst\`emes Int\'egrables}. The author acknowledges the excellent working conditions at Concordia University and the Centre des Recherches Math\'ematiques (CRM) in Montreal.

2020 Mathematics Subject Classification: Primary 05A15; Secondary 05E05, 05A05.\\

Keywords: longest increasing subsequence, random permutation of multiset, random word, Schur function, Toeplitz determinant\\

Concerned with the following sequences:\\
\seqnum{A220097}, \seqnum{A266734}, \seqnum{A266735}, \seqnum{A266737},  \seqnum{A266741}, \seqnum{A267479}, \seqnum{A267480}.

\end{document}